\newtheorem{theorem}{{\sc Theorem}}[section]
\def\Xint#1{\mathchoice
{\XXint\displaystyle\textstyle{#1}}%
{\XXint\textstyle\scriptstyle{#1}}%
{\XXint\scriptstyle\scriptscriptstyle{#1}}%
{\XXint\scriptscriptstyle\scriptscriptstyle{#1}}%
\!\int}
\def\XXint#1#2#3{{\setbox0=\hbox{$#1{#2#3}{\int}$ }
\vcenter{\hbox{$#2#3$ }}\kern-.6\wd0}}
\def\dashint{\Xint-}
\newcommand{\Gk}{\kappa}
\newcommand{\Gth}{\theta}
\bmdefine\BGa{\alpha}
\bmdefine\BGb{\beta}
\bmdefine\BGd{\delta}
\bmdefine\BGe{\epsilon}
\bmdefine\BGve{\varepsilon}
\bmdefine\BGf{\phi}
\bmdefine\BGvf{\varphi}
\bmdefine\BGg{\gamma}
\bmdefine\BGc{\chi}
\bmdefine\BGi{\iota}
\bmdefine\BGk{\kappa}
\bmdefine\BGl{\lambda}
\bmdefine\BGn{\eta}
\bmdefine\BGm{\mu}
\bmdefine\BGv{\nu}
\bmdefine\BGp{\pi}
\bmdefine\BGth{\theta}
\bmdefine\BGvth{\vartheta}
\bmdefine\BGr{\rho}
\bmdefine\BGvr{\varrho}
\bmdefine\BGs{\sigma}
\bmdefine\BGvs{\varsigma}
\bmdefine\BGt{\tau}
\bmdefine\BGj{\tau}
\bmdefine\BGu{\upsilon}
\bmdefine\BGo{\omega}
\bmdefine\BGx{\xi}
\bmdefine\BGy{\psi}
\bmdefine\BGz{\zeta}
\bmdefine\BGD{\Delta}
\bmdefine\BGF{\Phi}
\bmdefine\BGG{\Gamma}
\bmdefine\BGL{\Lambda}
\bmdefine\BGP{\Pi}
\bmdefine\BGT{\Theta}
\bmdefine\BGS{\Sigma}
\bmdefine\BGU{\Upsilon}
\bmdefine\BGO{\Omega}
\bmdefine\BGX{\Xi}
\bmdefine\BGY{\Psi}
\bmdefine\BCA{{\mathcal A}}
\bmdefine\BCB{{\mathcal B}}
\bmdefine\BCC{{\mathcal C}}
\bmdefine\BCD{{\mathcal D}}
\bmdefine\BCE{{\mathcal E}}
\bmdefine\BCF{{\mathcal F}}
\bmdefine\BCG{{\mathcal G}}
\bmdefine\BCH{{\mathcal H}}
\bmdefine\BCI{{\mathcal I}}
\bmdefine\BCJ{{\mathcal J}}
\bmdefine\BCK{{\mathcal K}}
\bmdefine\BCL{{\mathcal L}}
\bmdefine\BCM{{\mathcal M}}
\bmdefine\BCN{{\mathcal N}}
\bmdefine\BCO{{\mathcal O}}
\bmdefine\BCP{{\mathcal P}}
\bmdefine\BCQ{{\mathcal Q}}
\bmdefine\BCR{{\mathcal R}}
\bmdefine\BCS{{\mathcal S}}
\bmdefine\BCT{{\mathcal T}}
\bmdefine\BCU{{\mathcal U}}
\bmdefine\BCV{{\mathcal V}}
\bmdefine\BCW{{\mathcal W}}
\bmdefine\BCX{{\mathcal X}}
\bmdefine\BCY{{\mathcal Y}}
\bmdefine\BCZ{{\mathcal Z}}
\bmdefine\Bzr{ 0}
\bmdefine\Ba{ a}
\bmdefine\Bb{ b}
\bmdefine\Bc{ c}
\bmdefine\Bd{ d}
\bmdefine\Be{ e}
\bmdefine\Bf{ f}
\bmdefine\Bg{ g}
\bmdefine\Bh{ h}
\bmdefine\Bi{ i}
\bmdefine\Bj{ j}
\bmdefine\Bk{ k}
\bmdefine\Bl{ l}
\bmdefine\Bm{ m}
\bmdefine\Bn{ n}
\bmdefine\Bo{ o}
\bmdefine\Bp{ p}
\bmdefine\Bq{ q}
\bmdefine\Br{ r}
\bmdefine\Bs{ s}
\bmdefine\Bt{ t}
\bmdefine\Bu{ u}
\bmdefine\Bv{ v}
\bmdefine\Bw{ w}
\bmdefine\Bx{ x}
\bmdefine\By{ y}
\bmdefine\Bz{ z}
\bmdefine\BA{ A}
\bmdefine\BB{ B}
\bmdefine\BC{ C}
\bmdefine\BD{ D}
\bmdefine\BE{ E}
\bmdefine\BF{ F}
\bmdefine\BG{ G}
\bmdefine\BH{ H}
\bmdefine\BI{ I}
\bmdefine\BJ{ J}
\bmdefine\BK{ K}
\bmdefine\BL{ L}
\bmdefine\BM{ M}
\bmdefine\BN{ N}
\bmdefine\BO{ O}
\bmdefine\BP{ P}
\bmdefine\BQ{ Q}
\bmdefine\BR{ R}
\bmdefine\BS{ S}
\bmdefine\BT{ T}
\bmdefine\BU{ U}
\bmdefine\BV{ V}
\bmdefine\BW{ W}
\bmdefine\BX{ X}
\bmdefine\BY{ Y}
\bmdefine\BZ{ Z}
\date{}
\begin{document}
\title{The asymptotically sharp geometric rigidity interpolation estimate in thin bi-Lipschitz domains}
\author{D. Harutyunyan}
\maketitle

\begin{abstract}
This work is part of a program of development of asymptotically sharp geometric rigidity estimates for thin domains. A thin domain in three dimensional Euclidean space is roughly a small neighborhood of regular enough two dimensional compact surface. We prove an asymptotically sharp geometric rigidity interpolation inequality for thin domains with little regularity.
In contrast to that celebrated Friesecke-James-M\"uller rigidity estimate [\textit{Comm. Pure Appl. Math.,} 55(11):1461-1506, 2002] for plates, our estimate holds for any proper rotations 
$\BR\in SO(3).$ Namely, the estimate bounds the $L^p$ distance of the gradient of any $\By\in W^{1,p}(\Omega,\mathbb R^3)$ field from any constant proper rotation $\BR\in SO(3)$, in terms of the average $L^p$ distance (nonlinear strain) of the gradient $\nabla\By$ from the rotation group $SO(3)$, and the average $L^p$ distance of the field itself from the set of rigid motions corresponding to the rotation $\BR$. There are several remarkable facts about the estimate: 1. The constants in the estimate are sharp in terms of the domain thickness scaling for any thin domains with the required regularity. 2. In the special cases when the domain has positive or negative Gaussian curvature, the inequality reduces the problem of estimating the gradient $\nabla\By$ in terms of the prototypical nonlinear strain $\int_\Omega\mathrm{dist}^p(\nabla\By(x),SO(3))dx$ to the easier problem of estimating only the vector field $\By$ in terms of the nonlinear strain without any loss in the constant scalings as the Ans\"atze suggest. The later will be a geometric rigidity Korn-Poincar\'e type estimate. This passage is major progress in the thin domain rigidity problem. 3. For the borderline energy scaling (bending-to-stretching), the estimate implies improved strong compactness on the vector fields for free. Finally, this being said, our new interpolation inequality reduces the problem of proving "any" geometric one well rigidity problem in thin domains to estimating the vector field itself instead of the gradient, thus reducing the complexity of the problem. 
\end{abstract}

\section{Introduction}
\label{sec:1}

Let $S\subset \mathbb R^3$ be a connected and compact surface that has a normal a.e. (one may assume for instance that $S$ is Lipschitz). Given a small parameter $h>0,$ recall that a shell $S^h$ of thickness $h$ is the $h/2$ neighborhood of $S$ in the normal direction, i.e., $S^h=\{x+t\Bn(x) \ : \ x\in S,\ t\in (-h/2,h/2)\},$ where for any point $x\in S,$ the vector $\Bn(x)$ is the unit normal to $S$ at $x.$ Thin spatial domains are roughly shells with non-constant thickness. Namely, assume the functions $g_1^h(x),g_2^h(x)\colon S\to (0,\infty)$ are of order $h$ Lipschitz functions, i.e., they fulfill the uniform conditions 
\begin{equation}
\label{1.1}
h\leq g_1^h(x),g_2^h(x)\leq c_1 h,\quad \text{and}\quad |\nabla g_1^h(x)|+|\nabla g_2^h(x)|\leq c_2h,\quad\text{for a.e.}\quad x\in S,
\end{equation}
for some fixed constants $c_1,c_2>0.$ Then the set $\Omega$ given by 
\begin{equation}
\label{1.2}
\Omega=\{x+t\Bn(x) \ : \ x\in S,\ t\in (-g_1^h(x),g_2^h(x))\},
\end{equation}
will be a thin domain if the normal $\Bn(x)$ exists for all $x\in S.$ If $\Bn(x)$ exists only a.e. in $S,$ then instead of $\Omega$ we can consider the interior of the closure $\bar \Omega^{\circ},$ which will have no holes for any $x\in S^{\circ}.$ This being said we can assume without loss of generality that $S$ has a normal at every point $x\in S$ and that $g_1^h$ and $g_2^h$ are differentiable at every $x\in S.$ The problem of determination of the geometric rigidity (which will be defined below) of thin domains is a central task in nonlinear elasticity. The geometric rigidity of a thin domain can be defined through the celebrated geometric rigidity estimate of Friesecke, James and M\"uller [\ref{bib:Fri.Jam.Mue.1}], which reads as follows: \textit{Assume $\Omega\subset\mathbb R^3$ is open bounded connected and Lipschitz. Then there exists a constant $C=C(\Omega)$ depending only on $\Omega,$ such that for every vector field $\By\in H^1(\Omega),$ there exists a constant proper rotation $\BR\in SO(3)$ such that}
\begin{equation}
\label{1.3}
\|\nabla\By-\BR\|_{L^2(\Omega)}^2\leq C\int_\Omega\mathrm{dist}^2(\nabla\By(x),SO(3))dx.
\end{equation}
It is known that for thin domains $\Omega,$ the constant $C$ in (\ref{1.3}) blows up as $h$ goes to zero [2,3,4,5,6,7,8,9,10,11,12,13,21,22,23], and it typically has the asymptotic form $C=ch^{\alpha}$ for some $c>0$ and $\alpha<0.$ The geometric rigidity of $\Omega$ is then the exponent $\alpha$ in $C=ch^{\alpha}$ provided it exists as $h\to 0$; the bigger $\alpha$ is, the more rigid the domain is. A large amount of evidence in the literature suggests the yet unproven fact that $\alpha$ should depend only on the domain mid-surface $S$, while the constant $c$ will depends also on the constants $c_1$ and $c_2$ in (\ref{1.1}). In particular, for the case when $S$ has positive or negative Gaussian curvature the author has proven in [\ref{bib:Harutyunyan.2}] that $\alpha=1$ or $\alpha=4/3$ respectively for the around identity linearized analogue of (\ref{1.3}). Let us provide somewhat more details on the later. If one rotates the field $\Bu$ by the rotation $\BR$ in (\ref{1.3}), then the resulting estimate shows that one can assume without loss of generality that $\BR=\BI$. Recall that the field $\By-\Bx$ is the displacement in nonlinear elasticity, thus for small enough deformations, one has the equality $\nabla\By=\BI+\nabla\Bu,$ where the quantity $\nabla\Bu$ is small.
Thus we have the linearization
$$\mathrm{dist}(\nabla\Bu(x)+\BI,SO(3))=(\nabla\Bu+\nabla\Bu^T)=2e(\Bu),$$
where $e(\Bu)$ is the linear strain in the linear elasticity theory. Hence, upon this linearization one arrives at a new inequality, called Korn's first inequality
\begin{equation}
\label{1.4}
\|\nabla\Bu\|_{L^2(\Omega)}^2\leq C\|e(\Bu)\|_{L^2(\Omega)}^2.
\end{equation}
The inequality (\ref{1.4}) has been introduced by Korn [\ref{bib:Korn.1},\ref{bib:Korn.2}] to prove coerciveness of the linear elastic energy, see also 
[\ref{bib:Friedrichs},\ref{bib:Horgan},\ref{bib:Kon.Ole.1},\ref{bib:Kon.Ole.2}] for more details. Certainly (\ref{1.4}) follows from (\ref{1.3}) with the same constant $C$ by taking $\By=\Bx+\epsilon\Bu$ and letting $\epsilon$ go to zero. The problem of determining the rigidity of a given thin domain (the asymptotics of $C$ in terms of the thickness parameter $h$ in (\ref{1.3})) has been solved for plates\footnote{Or for shells that have a flat part} by Friesecke, James and M\"uller  [\ref{bib:Fri.Jam.Mue.1}], yielding the result $C=ch^{-2}.$ That is when
$\Omega=\omega\times(-h/2,h/2)$ for some connected compact Lipschitz set $\omega\subset\mathbb R^2.$ Also, it has been shown in [\ref{bib:Fri.Jam.Mor.Mue.}] that in fact the estimate (\ref{1.3}) holds with $\alpha=-2$ for any shells $S^h,$ i.e., there exist constant $C,h_0>0$ depending only on $S$ such that 
\begin{equation}
\label{1.5}
\|\nabla\By-\BR\|_{L^2(S^h)}^2\leq \frac{C}{h^2}\int_{S^h}\mathrm{dist}^2(\nabla\By(x),SO(3))dx.
\end{equation}
for all $h\in (0,h_0).$ Among others, very significant applications of the inequality (\ref{1.3})
are that it allows one to derive shell theories from three dimensional elasticity for certain scaling regimes of the elastic energy in terms of the thickness $h,$ 
[\ref{bib:Fri.Jam.Mue.1},\ref{bib:Fri.Jam.Mue.2},\ref{bib:Fri.Jam.Mor.Mue.},\ref{bib:Hor.Lew.Pak.},\ref{bib:Lew.Mor.Pak.}], as well as it allows one to calculate 
the critical buckling load in shell buckling problems [\ref{bib:Gra.Har.3},\ref{bib:Gra.Har.4}]. In both problems knowing the the asymptotics of the optimal constant in (\ref{1.3})
is crucial. 
In this work we are concerned with studying (\ref{1.3}) for thin domains $\Omega\subset\mathbb R^3.$ We prove an interpolation version of (\ref{1.3}) which reduces the problem to the estimation of the deviation of the vector field $\Bu$ itself (not the gradient) from the group of rigid motions. Despite the fact that the proof of the inequality is surprisingly elementary and easy, this is apparently a significant reduction of the complexity of the problem, taking into account the fact that in the case of uniformly positive or negative Gaussian curvature thin domains (this refers to the Gaussian curvature of the mid-surface $S$), no loss of the constant in terms of the asymptotics in $h$ is expected as the lower bounds and the Ans\"atze in [\ref{bib:Harutyunyan.2}] suggest. We provide a more detailed observation on this in the next section after the formulation of the main result.

\section{Main result}
\setcounter{equation}{0}

For the mid-surface $S$ to have a normal a.e. and the main result to hold, we will impose some mild regularity condition on $S.$ While in the literature a usual assumption 
on the mid-surface $S$ would be $C^2$, we will assume that $S$ is compact connected and bi-Lipschitz, i.e., it has a finite atlas with bi-Lipschitz patches. 
This will imply the following geometric condition: \textit{There exist $\sigma,\delta\in (0,1)$ such that for every $r\in (0,\delta)$ and every $x\in S$ one has 
\begin{equation}
\label{2.1}
\frac{\mathcal {H}^2 (B_{r}(x)\cap S)}{\mathcal {H}^2 (B_{2r}(x)\cap S)}\leq \sigma,
\end{equation}
where $\mathcal {H}^2$ is the two-dimensional Hausdorff measure (surface measure in this case).} Condition (\ref{2.1}) roughly means that there can be no infinitesimal local concentrations of the surface $S.$

\begin{theorem}
\label{th:2.1}
Let the surface $S\subset\mathbb R^3$ be compact connected and bi-Lipschitz, and let $h>0$ be a small parameter. Assume the family of functions $g_1^h,g_2^h \colon S\to (0,\infty)$ fulfills the uniform conditions 
\begin{equation}
\label{2.2}
h\leq g_1^h(x),g_2^h(x)\leq c_1 h,\quad \text{and}\quad |\nabla g_1^h(x)|+|\nabla g_2^h(x)|\leq c_2h,\quad\text{for all}\quad x\in S,
\end{equation}
for some constants $c_1,c_2>0,$ and denote the thin domains 
$$
\Omega^h=\{x+t\Bn(x) \ : \ x\in S,\ t\in (-g_1^h(x),g_2^h(x))\}.
$$
Let $1<p<\infty$ and let $\|\cdot\|_{p}$ denote the $L^p(\Omega^h)$ norm. Then there exists a constants $C,h_0>0,$ depending only on $S$ and the constants $c_1,c_2>0$ such that for any vector field $\By\in W^{1,p}(\Omega^h),$ any proper rotation $\BR\in SO(3),$ and any constant vector $\Bb\in\mathbb R^3$ one has the estimate 
\begin{equation}
 \label{2.3}
 \|\nabla \By-\BR\|_p^2\leq C\left(\frac{\|\By-\BR x-\Bb\|_p\|\mathrm{dist}(\nabla\By,SO(3))\|_p}{h}+\|\By-\BR x-\Bb\|_p^2+\|\mathrm{dist}(\nabla\By,SO(3))\|_p^2\right),
\end{equation}
for all $h\in (0,h_0).$ Moreover, if in addition $S$ has a $C^2$ piece that admits a local chart in terms of the principal coordinates, then the exponent of $h$ in the inequality (\ref{2.2}) is optimal for $\Omega^h,$ i.e., there exists a deformation 
$\By\in W^{1,p}(\Omega^h,\mathbb R^3)$ realizing the asymptotics of $h$ in (\ref{2.2}).
\end{theorem}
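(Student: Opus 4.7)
By the change of variables $\By\mapsto \BR^{T}(\By-\Bb)$, which preserves the nonlinear strain (since $SO(3)$ is left-invariant under $\BR^{T}$) and maps $\By-\BR x-\Bb$ to $\BR^{T}(\By-\BR x-\Bb)$, I may assume $\BR=\BI$ and $\Bb=\mathbf 0$; set $\Bu:=\By-x$, $A:=\|\Bu\|_p$ and $B:=\|\mathrm{dist}(\nabla\By,SO(3))\|_p$. The heart of the argument is a local application of the Friesecke--James--M\"uller estimate on pieces of $\Omega^h$ of linear size $\asymp h$. Using the bi-Lipschitz structure of $S$ together with the non-concentration property (2.1), one builds a bounded-overlap cover $\{P_i\}$ of $\Omega^h$ of the form $P_i:=\Omega^h\cap B_{ch}(x_i)$ for a maximal $h$-separated family $\{x_i\}\subset S$; each $P_i$ is bi-Lipschitz equivalent to a Euclidean cube of side $h$, with constants depending only on $S$, $c_1$, $c_2$. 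Hence the FJM constant on each $P_i$ is a scale-invariant absolute constant, producing rotations $\BR_i\in SO(3)$ and vectors $\Bb_i\in\mathbb R^3$ with
\[
\|\nabla\By-\BR_i\|_{L^p(P_i)}\lesssim B_i,\qquad \|\By-\BR_i x-\Bb_i\|_{L^p(P_i)}\lesssim h\,B_i,
\]
where $B_i$ denotes the local $L^p$-norm of $\mathrm{dist}(\nabla\By,SO(3))$ on $P_i$.

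The next ingredient is the affine-function lower bound $\int_{P_i}|(\BR_i-\BI)(x-\bar x_i)|^{p}\,dx\gtrsim |\BR_i-\BI|^{p}\,h^{p+3}$, valid because each $P_i$ contains a Euclidean cube of side $\asymp h$. Combined with the triangle inequality
\[
\|(\BR_i-\BI)(x-\bar x_i)\|_{L^p(P_i)}\lesssim \|\By-\BR_i x-\Bb_i\|_{L^p(P_i)}+\|\Bu\|_{L^p(P_i)}\lesssim h\,B_i+A_i,
\]
with $A_i:=\|\Bu\|_{L^p(P_i)}$, this yields $|\BR_i-\BI|\cdot h^{3/p}\lesssim B_i+A_i/h$; a triangle inequality on each cell therefore gives $\|\nabla\By-\BI\|_{L^p(P_i)}\lesssim B_i+A_i/h$. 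Summing the $p$-th powers over the finite-multiplicity cover produces the preliminary one-sided estimate $\|\nabla\By-\BI\|_p\lesssim B+A/h$.

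The main obstacle is upgrading this to the sharper interpolation form of~(2.3): na\"ively squaring $B+A/h$ produces $A^{2}/h^{2}$ rather than the bare $A^{2}$. I anticipate the improvement to come from two observations. First, it suffices to restrict to the $\Bb$ annihilating the mean of $\By-\BR x$ on $\Omega^h$, since the RHS of (2.3) is monotone in $A$ and this choice can only decrease $A$. Second, and crucially, the \emph{global} affine lower bound on $\Omega^h$ for matrices of the form $\BR^*-\BI$ with $\BR^*\in SO(3)$ near $\BI$ is non-degenerate: since $\BR^*-\BI$ is approximately skew-symmetric, its two-dimensional image always has a non-trivial tangential projection, so
\[
\|(\BR^*-\BI)(x-\bar x)\|_{L^p(\Omega^h)}\gtrsim |\BR^*-\BI|\cdot |\Omega^h|^{1/p}
\]
without the $h$-penalty that appears for generic matrices. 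Combining the cell-wise compatibility $|\BR_i-\BR_j|\,h^{3/p}\lesssim B_{P_i\cup P_j}$ (applied along chains of adjacent cells) with this improved global lower bound lets the \emph{mean} of the rotation field $\BR_i$ be controlled by $A$ alone (with $h$-independent constant) and its \emph{fluctuations} by $B$ alone; Cauchy--Schwarz across the two contributions delivers the mixed term $AB/h$ together with the bare $A^{2}$ and $B^{2}$ terms in~(2.3).

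For the optimality statement, I would test (2.3) against the classical shell-bending Ans\"atz on the $C^{2}$-piece parametrised by principal coordinates $(\xi_1,\xi_2)$: take $\By=\Br(\xi)+x_3\Bn(\xi)+h\,\BW(\xi,x_3)$ with $\BW$ tuned to cancel the leading-order membrane strain. A direct calculation of $A$, $B$ and $\|\nabla\By-\BR\|_p$ under this Ans\"atz shows that $\|\nabla\By-\BR\|_p^{2}$ and $AB/h$ are of the same order in $h$, so the exponent $-1$ on $h$ in the cross term of (2.3) cannot be improved to any $-\alpha$ with $\alpha>1$.
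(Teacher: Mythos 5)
Your setup is on the right track---reduce to $\BR=\BI$, $\Bb=0$, cover by small pieces, apply Friesecke--James--M\"uller locally, combine with a Poincar\'e bound and an affine lower bound for $(\BR_i-\BI)x-\Bb_i$---but you fix the covering scale at $\asymp h$, and you yourself note that this gives only $\|\nabla\By-\BI\|_p\lesssim B+A/h$, whose square carries the wrong term $A^2/h^2$. The proposed repair is where the proof has a genuine gap.

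The mean/fluctuation decomposition does not close it. The global affine lower bound you state is indeed true (the mid-surface has $O(1)$ tangential extent, so $\|(\BR^*-\BI)(x-\bar x)\|_{L^p(\Omega^h)}\gtrsim|\BR^*-\BI|\,|\Omega^h|^{1/p}$ with an $h$-independent constant), but combined with a single \emph{global} FJM application it produces only the other endpoint estimate $\|\nabla\By-\BI\|_p^2\lesssim A^2+B^2/h^2$. Having both endpoint forms $A^2+B^2/h^2$ and $B^2+A^2/h^2$ is strictly weaker than the interpolation (2.3): take $A\asymp h^{1/2}$, $B\asymp 1$; then $\min(A^2+B^2/h^2,\;B^2+A^2/h^2)\asymp 1/h$, while $AB/h+A^2+B^2\asymp 1/h^{1/2}$. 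Moreover, the cell-to-cell compatibility $|\BR_i-\BR_j|\,h^{3/p}\lesssim B_{P_i\cup P_j}$ has to be summed along chains of length $O(1/h)$, which reintroduces negative powers of $h$; there is no clean Cauchy--Schwarz that splits the $\BR_i$'s into an ``$A$-controlled mean'' and ``$B$-controlled fluctuation'' from cells all at scale $h$.

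The missing idea, and the one the paper uses, is to make the cell size itself the interpolation parameter. Covering the shell by pieces of in-plane size $h^\gamma$, $\gamma\in[0,1]$, the rescaled local FJM constant is $h^{-(1-\gamma)}$, the Poincar\'e constant is $h^\gamma$, and the affine lower bound on each cell reads $\|(\BI-\BR_i)x-\Bb_i\|_{L^p(S_i^h)}\gtrsim h^\gamma\|\BI-\BR_i\|_{L^p(S_i^h)}$. Combining these cell by cell and summing $p$-th powers gives, for \emph{every} $\gamma\in[0,1]$,
\[
\|\nabla\Bv\|_{L^p(S^h)}\leq C\Bigl(\frac{A}{h^\gamma}+\frac{B}{h^{1-\gamma}}\Bigr),
\]
that is, $\|\nabla\Bv\|_p^2\leq C_1(A^2/h^t+B^2/h^{2-t})$ for all $t\in[0,2]$, and an elementary case analysis (balance the two terms when a balancing $t\in[0,2]$ exists, otherwise use $t\in\{0,2\}$) shows this one-parameter family is equivalent to (2.3). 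No chain of cells and no global mean rotation enter; the passage from the constant-thickness shell $S^h$ to the thin domain $\Omega^h$ is then a separate, fixed-scale localization step. Your Ansatz sketch for optimality is in the right spirit but too vague to conclude: the paper uses a specific construction with the boundary-layer scaling $\theta/\sqrt h$ in the principal coordinates, and without that anisotropic scaling the cross term and the gradient term do not balance at the claimed order in $h$.
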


Some remarks are in order. 
\begin{itemize}
\item[1.] Note first that for any given displacement $\Bu\in W^{1,p}(\Omega^h,\mathbb R^3),$ taking the sequence of deformations $\By^\epsilon=\Bx+\epsilon\Bu$ and then letting $\epsilon$ go to zero we derive from (\ref{2.3}) the estimate 
\begin{equation}
 \label{2.4}
 \|\nabla \Bu\|_p^2\leq C\left(\frac{\|\Bu\|_p\|e(\Bu)\|_p}{h}+\|\Bu\|_p^2+\|e(\Bu)\|_p^2\right),
\end{equation}
where $\Bu=\frac{1}{2}(\nabla\Bu+\nabla\Bu^T)$ is the linear strain. The estimate (\ref{2.4}) is the linear version of (\ref{2.3}). A stronger version of (\ref{2.4}) has been proven in 
[\ref{bib:Harutyunyan.3},\ref{bib:Harutyunyan.4}] for the $L^2$ norm and in [\ref{bib:Harutyunyan.5}] for every $1<p<\infty$, where in place of the product $\|\Bu\|_2\|e(\Bu)\|_2$ one has $\|\Bn\cdot \Bu\|_2\|e(\Bu)\|_2,$ i.e., only the out-of-plane component of the field enters the estimate. 
\item[2.] Next denote the Gaussian curvature of $S$ by $K.$ Tovstik and Smirnov have constructed an Ansatz [\ref{bib:Tov.Smi.}] that realizes the asymptotics $\alpha=-4/3$ in the constant $C=ch^{\alpha}$ in (\ref{1.3}) in the case $K<0.$ Also the author has constructed an Ansatz [\ref{bib:Harutyunyan.2}] that gives 
the asymptotics $\alpha=-1$ in (\ref{1.3}) in the case $K>0.$ Furthermore, it has been proven in [\ref{bib:Harutyunyan.2}] that if zero boundary condition is imposed 
on the vector field $\Bu$ on the thin face of the thin domain $\Omega^h,$ then in the linear version (\ref{1.4}) 
one indeed has $\alpha=-4/3$ and $\alpha=-1$ in the cases $K<0$ and $K<0$ respectively. Also, it has been shown [\ref{bib:Harutyunyan.2}] that 
$$\|\Bu\|_{L^2(\Omega^h)}^2\leq Ch^\beta\|e(\Bu)\|_{L^2(\Omega)}^2,$$
where $\beta=-1/3$ for $K<0$ and $\beta=0$ for $K>0.$ This implies that the estimate (\ref{2.3}) indeed does not suffer an asymptotic loss of constants 
at least in the cases $K><0,$ and thus from this point on, the interpolation estimate (\ref{2.3}) may be utilized for the purpose of proving asymptotically optimal 
rigidity estimates on gradient fields.
\item[3.] In the derivation of shell theories from nonlinear elasticity by $\Gamma-$convergence, one usually assumes that the elastic energy density has a quadratic growth at the group of proper rotations (which is the elastic energy well) [\ref{bib:Fri.Jam.Mue.1},\ref{bib:Fri.Jam.Mue.2},\ref{bib:Hor.Lew.Pak.},\ref{bib:Lew.Mor.Pak.},\ref{bib:Mueller}], i.e., 
$$W(\nabla\By)\geq C\cdot\mathrm{dist}^2(\nabla\By,SO(3))$$
for some fixed constant $C>0$ and all vector fields $\By\in W^{1,2}(\Omega,\mathbb R^3).$ Then one considers different scaling regimes of the elastic energy $\int_\Omega^h W(\nabla\By(x))dx$
in terms of the thickness $h.$ A critical energy scaling is $E_{el}\sim h^3,$ which corresponds to bending. It is then worth mentioning that while Friesecke-James-M\"uller estimate (\ref{1.3}) 
for $\alpha=-2$ implies weak relative compactness in $L^2$ for the sequence of the rescaled (in the normal variable) gradients\footnote{However there are tools to deduce strong convergence of the gradients} $\nabla\By_h$ and strong compactness in $L^2$ for the sequence of rescaled fields $\By_h$ (as $h\to 0$), our estimate (\ref{2.3}) then will clearly yield strong compactness of the rescaled gradients $\nabla\By_h$ immediately. 
\end{itemize}

\section{Proof of the main result}
\label{sec:3}
\setcounter{equation}{0}

\begin{proof}[Proof of Theorem~\ref{th:2.1}]
We divide the proof into several steps for the convenience of the reader.\\
1. We first somewhat simplify the estimate (\ref{2.3}). First of all a translation by a fixed vector $\By=\Bv+\Bb$ does not change the gradient, thus we can assume without loss of generality that $\Bb=0.$ Next, denoting $\Bv=\BR\Bw,$ the left hand side of (\ref{2.3}) will become $\|\Bw-\BI\|_p^2,$ and the right hand side of (\ref{2.3}) will remain the same expression written out for $\Bw$ in place of $\Bv.$ This being said we can assume without loss of generality that $\BR=\BI$ and $\Bb=0$ in (\ref{2.3}). Finally, making a change of variables $\By=\Bw+\Bx$ will transform the new form of (\ref{2.3}) to the estimate 
\begin{equation}
 \label{3.1}
 \|\nabla \Bw\|_p^2\leq C\left(\frac{\|\Bw\|_p\|\mathrm{dist}(\nabla\Bw+\BI,SO(3))\|_p}{h}+\|\Bw\|_p^2+\|\mathrm{dist}(\nabla\Bw+\BI,SO(3))\|_p^2\right),
\end{equation} 
to be now proven.\\
2. In the second step we prove the following statement: \textit{Under the conditions of Theorem~\ref{th:2.1}, the estimate (\ref{3.1}) holds if and inly if one has for any field 
$\Bv\in W^{1,p}(\Omega^h,\mathbb R^3)$ the estimate
\begin{equation}
 \label{3.2}
 \|\nabla \Bv\|_p^2\leq C_1\left(\frac{\|\Bv\|_p^2}{h^{t}}+\frac{\|\mathrm{dist}(\nabla\Bv+\BI,SO(3))\|_p^2}{h^{2-t}}\right),
\end{equation}
for any $t\in [0,2].$ Here $C_1>0,$ and $C$ and $C_1$ in (\ref{3.1}) and (\ref{3.2}) are of the same order, namely, $1/2\leq\frac{C_1}{C}\leq 2.$ }\\
Evidently as $h>0$ is small, we have $h^t,h^{2-t}\leq 1$ for $t\in[0,2]$ and (\ref{3.1}) implies (\ref{3.2}) by the arithmetic-geometric mean inequality with $C_1=\frac{3}{2}C.$ 
Assume now (\ref{3.2}) holds. Given the fixed vector field $\Bw\in W^{1,p}(\Omega^h,\mathbb R^3),$ if 
$$\frac{\|\Bw\|_p^2}{h^{t_0}}=\frac{\|\mathrm{dist}(\nabla\Bw+\BI,SO(3))\|_p^2}{h^{2-t_0}}\quad \text{for some} \quad t_0\in[0,2],$$
 then we choose $t=t_0$ in (\ref{3.2}) and get the estimate 
$$\|\nabla \Bw\|_p^2\leq 2C_1\frac{\|\Bw\|_p\|\mathrm{dist}(\nabla\Bw+\BI,SO(3))\|_p}{h}.$$
If 
$$\frac{\|\Bw\|_p^2}{h^{t}}<\frac{\|\mathrm{dist}(\nabla\Bw+\BI,SO(3))\|_p^2}{h^{2-t}}\quad \text{ for all} \quad t\in[0,2],$$
then (\ref{3.2}) implies 
$$\|\nabla \Bw\|_p^2\leq 2C_1\mathrm{dist}(\nabla\Bw+\BI,SO(3))\|_p^2,$$
which in tern yields (\ref{3.1}) with $C=2C_1.$ The case 
$$\frac{\|\Bw\|_p^2}{h^{t}}<\frac{\|\mathrm{dist}(\nabla\Bw+\BI,SO(3))\|_p^2}{h^{2-t}}\quad  \text{for all} \quad t\in[0,2]$$
is analogous.\\
3. Now we focus our attention to the simplified estimate (\ref{3.2}) with no product terms. We will first prove it on the shell $S^h=\{x+t\Bn(x) \ : \ x\in S,\ t\in (-h/2,h/2)\}$ with thickness $h$ around $S$ and then pass to the thin domain $\Omega^h$ employing a localization argument originated in [\ref{bib:Koh.Vog.}] and successfully employed in the derivation of the estimate (\ref{1.5}), as well as 
in [\ref{bib:Harutyunyan.3}] to pass from shells to thin domains. Fix $\gamma\in[0,1]$ and divide the shell $S^h$ into small compact shells with in-plane size 
of order $h^\gamma.$ Denoting $m=[1/h^\gamma],$ we have $N=O(m^2)$ shells $S_1^h,S_2^h,\dots,S_N^h,$ with thickness $h$ and in-plane size 
roughly $h^\gamma.$ It is known that the estimate (\ref{1.5}) holds in $L^p$ for all $1<p<\infty,$ thus if we scale each $S_i^h$ by $h^\gamma$ to get a new shell 
$\frac{1}{h^\gamma}S_i^h$ with in-plane size of order one and thickness $h^{1-\gamma},$ and apply the estimate (\ref{1.5}) to the fields $h^\gamma(\Bv+\Bx),$ 
we obtain for the vector field $\Bv+\Bx$ the estimate
\begin{equation}
\label{3.3}
\|\nabla\Bv+\BI-\BR_i\|_{L^p(S_i^h)}\leq \frac{C}{h^{1-\gamma}}\|\mathrm{dist}(\nabla\Bv+\BI,SO(3))\|_{L^p(S_i^h)} ,\quad i=1,2,\dots,N,
\end{equation} 
for some local rotations $\BR_i\in SO(3)$ and some uniform constant $C>0$ that depends only on $S.$  
Consequently we obtain from (\ref{3.3}) the bound
\begin{equation}
\label{3.4}
\|\nabla\Bv\|_{L^p(S_i^h)}\leq \|\BI-\BR_i\|_{L^p(S_i^h)}+\frac{C}{h^{1-\gamma}}\|\mathrm{dist}(\nabla\Bv+\BI,SO(3))\|_{L^p(S_i^h)},   \quad i=1,2,\dots,N.
\end{equation} 
For the average fields $\Bb_i=\dashint_{S_i^h}(\Bv(x)+(\BI-\BR_i)x)dx$ we have by the Poincar\'e inequality and (\ref{3.3}) that
\begin{align}
\label{3.5}
\|\Bv+(\BI-\BR_i)x-\Bb_i\|_{L^p(S_i^h)}& \leq Ch^{\gamma} \|\nabla \Bv+\BI-\BR_i\|_{L^p(S_i^h)}  \\ \nonumber
&\leq Ch^{2\gamma-1}\|\mathrm{dist}(\nabla\Bv+\BI,SO(3))\|_{L^p(S_i^h)},
\end{align} 
which gives the bound 
\begin{equation}
\label{3.6}
 \|(\BI-\BR_i)x-\Bb_i\|_{L^p(S_i^h)}\leq \|\Bv\|_{L^p(S_i^h)}+Ch^{2\gamma-1}\|\mathrm{dist}(\nabla\Bv+\BI,SO(3))\|_{L^p(S_i^h)},\quad i=1,2,\dots,N.
\end{equation} 
Next we claim that
\begin{equation}
\label{3.7}
\|(\BI-\BR_i)x-\Bb_i\|_{L^p(S_i^h)}\geq Ch^{\gamma}\|\BI-\BR_i\|_{L^p(S_i^h)},
\end{equation} 
for some $C>0$ uniformly in $i=1,2,\dots,N.$ Indeed, on one hand we have the obvious equality
\begin{equation}
\label{3.8}
\|\BI-\BR_i\|_{L^p(S_i^h)}^p=|\BI-\BR_i|^p|S_i^h|,\quad i=1,2,\dots,N.
\end{equation} 
To estimate the other term we interpret it geometrically. Each local rotation $\BR_i$ rotates around a unit vector $\Bn_i\in\mathbb R^3,$ 
which means that the operator $\BR_ix-\Bb_i\colon\mathbb R^3\to\mathbb R^3$ projects onto the plane $\pi_i$ orthogonal to $\Bn_i,$
then rotates by $\BR_i$ inside $\pi_i,$ and then translates by the vector $-\Bb_i.$ Assume the plane $\pi_i$ is applied at the tip of the vector $\Bb_i.$ Note that as each shell $S_i^h$ 
has in-plane size of order $h^\gamma,$ then invoking condition (\ref{2.1}) (first uncsaling each piece by $h^\gamma$) we have that for some fixed 
$\tau\in(0,1)$ (uniform in $i$ and independent on $h$), at most half of $S_i^h$ (in terms of the measure) gets projected into the disc $D_i,$ 
which is centered at $\Bb_i,$ lies in $\pi_i$ and has radius $\tau h^\gamma.$ As in two dimensions one always has $|(\BI-\BR)x|=|\BI-\BR||x|$ for any 
rotation $\BR\in SO(2)$ and any vector $x\in \mathbb R^2,$ then taking into account the above observation, we can write the obvious estimate
\begin{equation}
\label{3.9}
\|(\BI-\BR_i)x-\Bb_i\|_{L^p(S_i^h)}^p\geq \frac{(\tau h^\gamma)^p|\BI-\BR_i|^p|S_i^h|}{2},\quad i=1,2,\dots,N. 
\end{equation}
The estimate (\ref{3.7}) immediately follows from (\ref{3.8}) and (\ref{3.9}).
Finally putting together (\ref{3.4}), (\ref{3.6}) and (\ref{3.7}) and summing up the obtained estimates in $i,$ we discover the bound
\begin{equation}
\label{3.10}
\|\nabla\Bv\|_{L^p(S^h)}\leq C\left(\frac{1}{h^\gamma} \|\Bv\|_{L^p(S^h)}+\frac{1}{h^{1-\gamma}}\|\mathrm{dist}(\nabla\Bv+\BI,SO(3))\|_{L^p(S^h)}\right),
\end{equation} 
which is equivalent to (\ref{3.2}) and finishes the Ansatz-free lower bound part of the Theorem for shells $S^h.$ Now we pass from $S^h$ to the thin domain $\Omega^h$ 
by a localization argument. Fix any $h>0$ small enough divide the mid-surface $S$ into $N$ small parts that are of order $h$ in two in-plane orthogonal directions, 
where $N$ is roughly $1/h^2.$ We can do the division so that all the small pieces of surface have uniformly bounded Lipschitz constants for all small enough $h>0.$ Then denote the $h$ neighborhood in the normal direction of each small piece of surface by $s_i^h$ and the part bounded between the functions $g_1^h$ and $g_2^h$ in the normal direction by $\omega_i^h$ for $i=1,2,\dots,N.$ For any $i=1,2,\dots,N$, the domains $s_i^h$ and $\omega_i^h$ have uniformly bounded Lipschitz constants, thus we have by the estimate (\ref{1.3})
\begin{align}
\label{3.11}
\|\nabla\Bv+\BI-\BR_i^1\|_{L^p(s_i^h)}&\leq C\|\mathrm{dist}(\nabla\Bv+\BI,SO(3))\|_{L^p(s_i^h)},\\ \nonumber
\|\nabla\Bv+\BI-\BR_i^2\|_{L^p(\omega_i^h)}&\leq C\|\mathrm{dist}(\nabla\Bv+\BI,SO(3))\|_{L^p(\omega_i^h)},
\end{align}
for some uniform constant $C>0$ and constant rotations $\BR_i^1$ and $\BR_i^2.$ As (\ref{2.2}) suggests $s_i^h$ is a subset of $\omega_i^h$ thus we have by the triangle inequality and from (\ref{3.11}) that 
\begin{equation}
\label{3.12}
\|\BR_i^1-\BR_i^2\|_{L^p(s_i^h)}\leq C\|\mathrm{dist}(\nabla\Bv+\BI,SO(3))\|_{L^p(\omega_i^h)}.
\end{equation}
Recalling the fact that $s_i^h$ and $\omega_i^h$ have uniformly bounded Lipschitz constants, the condition (\ref{2.2}) implies that they have uniformly comparable volumes too, 
thus we get from (\ref{3.11}), (\ref{3.12}) and the triangle inequality:
\begin{align*}
\|\nabla\Bv\|_{L^p(\omega_i^h)}&\leq \|\nabla\Bv+\BI-\BR_i^2\|_{L^p(\omega_i^h)}+\|\BI-\BR_i^2\|_{L^p(\omega_i^h)}\\
&\leq C\|\mathrm{dist}(\nabla\Bv+\BI,SO(3))\|_{L^p(\omega_i^h)}+C\|\BI-\BR_i^2\|_{L^p(s_i^h)}\\
&\leq C\|\mathrm{dist}(\nabla\Bv+\BI,SO(3))\|_{L^p(\omega_i^h)}+C\|\BI-\BR_i^1\|_{L^p(s_i^h)}+\|\BR_i^1-\BR_i^2\|_{L^p(s_i^h)}\\
&\leq C\|\mathrm{dist}(\nabla\Bv+\BI,SO(3))\|_{L^p(\omega_i^h)}+C\|\nabla\Bv+\BI-\BR_i^1\|_{L^p(s_i^h)}+C\|\nabla\Bv\|_{L^p(s_i^h)}\\
&\leq C\|\mathrm{dist}(\nabla\Bv+\BI,SO(3))\|_{L^p(\omega_i^h)}+C\|\nabla\Bv\|_{L^p(s_i^h)}.\\
\end{align*}
Consequently summing up in $1,2,\dots,N$ we arrive at 
 \begin{equation}
\label{3.13}
\|\nabla\Bv\|_{L^p(\Omega^h)}\leq C\|\mathrm{dist}(\nabla\Bv+\BI,SO(3))\|_{L^p(\Omega^h)}+\|\nabla\Bv\|_{L^p(S^h)},
\end{equation}
coupling which with (\ref{3.10}) we discover (\ref{3.2}), i.e., the estimate for the thin domain $\Omega^h.$ This completes the Ansatz-free lower bound part of the proof.
 
Recall that in the case when $S$ has patch that is $C^2$, then an Ansatz realizing the asymptotics of $h$ in Korn's first inequality for shells with positive Gaussian curvature has been constructed in [\ref{bib:Harutyunyan.2}]. It turns out that the same Ansatz also works for (\ref{2.3}). For the sake of completeness and convenience of the reader we recall the Ansatz construction here. Assume $S$ is $C^2$ and has a single patch given by the parametrization $\Br=\Br(\Gth,z)$ in the principal coordinates $\Gth$ and $z.$ Then, introducing the normal coordinate $t$,
we obtain the set of orthogonal curvilinear coordinates $(t,\Gth,z)$ on the patch given by
\[
\BR(t,\Gth,z)=\Br(z,\Gth)+t\Bn(z,\Gth),
\]
where $\Bn$ is the \emph{outward} unit normal. We choose a part $S_0\subset S$ of the patch that is given by
$S_0=\{(\Gth,z) \ : \ \Gth\in[0,\delta], \ z\in[0,\delta]\}$.
 Denote next
\[
A_{z}=\left|\frac{\partial \Br}{\partial z}\right|,\qquad A_{\Gth}=\left|\frac{\partial \Br}{\partial\Gth}\right|,
\]
the two nonzero components of the metric tensor of $S_0$ and the two principal curvatures by $\Gk_{z}$ and $\Gk_{\Gth}$.
The signs of $\Gk_{z}$ and $\Gk_{\Gth}$ are chosen such that $\kappa_{z}$ and $\kappa_{\Gth}$ are positive for a sphere.
Then the gradient of a vector field $\By=(y_t,y_\Gth,y_z)\in W^{1,p}(S_0^h,\mathbb R^3)$ has the form 
\begin{equation}
\label{3.14}
\nabla\By=
\begin{bmatrix}
  y_{t,t} & \dfrac{y_{t,\Gth}-A_{\Gth}\Gk_{\Gth}y_{\Gth}}{A_{\Gth}(1+t\Gk_{\Gth})} &
\dfrac{y_{t,z}-A_{z}\Gk_{z}y_{z}}{A_{z}(1+t\Gk_{z})}\\[3ex]
y_{\Gth,t}  &
\dfrac{A_{z}y_{\Gth,\Gth}+A_{z}A_{\Gth}\Gk_{\Gth}y_{t}+A_{\Gth,z}y_{z}}{A_{z}A_{\Gth}(1+t\Gk_{\Gth})} &
\dfrac{A_{\Gth}y_{\Gth,z}-A_{z,\Gth}y_{z}}{A_{z}A_{\Gth}(1+t\Gk_{z})}\\[3ex]
y_{ z,t}  & \dfrac{A_{z}y_{z,\Gth}-A_{\Gth,z}y_{\Gth}}{A_{z}A_{\Gth}(1+t\Gk_{\Gth})} &
\dfrac{A_{\Gth}y_{z,z}+A_{z}A_{\Gth}\Gk_{z}y_{t}+A_{z,\Gth}y_{\Gth}}{A_{z}A_{\Gth}(1+t\Gk_{z})}
\end{bmatrix}
\end{equation}
in the orthonormal basis $\Be_{t}$, $\Be_{\Gth}$, $\Be_{z},$ where $f,x=\partial_xf.$ We choose $\By=\BI+\epsilon\Bu,$ where 
\begin{equation}
\label{3.15}
\begin{cases}
u_t=W(\frac{\Gth}{\sqrt{h}},z)\\
u_\Gth=-\frac{t\cdot W_{,\Gth}\left(\frac{\Gth}{\sqrt h},z\right)}{A_\Gth{\sqrt h}}\\
u_z=-\frac{t\cdot W_{,z}\left(\frac{\Gth}{\sqrt h},z\right)}{A_z},
\end{cases}
\end{equation}
where $w$ is a smooth function compactly supported on the mid-surface $S_0.$ For the rotation $\BR=\BI$ and the vector field $\Bb=0,$ this choice will give equality in (\ref{2.3}) 
for every fixed $h>0$ by choosing $\epsilon>0$ small enough.

\end{proof}

\section*{Acknowledgements}
This material is based upon work supported by the National Science Foundation under Grants No. DMS-1814361.

\end{document}